\theoremstyle{plain}
\newtheorem{theorem}{Theorem}[section]
\newtheorem{lemma}{Lemma}[section]
\newtheorem{proposition}{Proposition}[section]
\title[Complete lagrangian self-expanders in $\mathbb C^{2}$]
{Complete lagrangian self-expanders in $\mathbb C^{2}$}
\author [Z. Li and G. Wei]{Zhi Li and Guoxin Wei}
\address{Zhi Li \\  \newline \indent College of Mathematics and Information Science, Henan Normal University,
\newline \indent 453007, Xinxiang, Henan, China.}
\email{lizhihnsd@126.com}
\address{Guoxin Wei \\ \newline \indent School of Mathematical Sciences, South China Normal University,
\newline \indent 510631, Guangzhou,  China.}
\email{weiguoxin@tsinghua.org.cn}
\begin{document}
\maketitle

\begin{abstract}
In this paper, we obtain a classification theorem of $2$-dimensional complete Lagrangian self-expanders with constant squared norm of the second fundamental form in $\mathbb C^{2}$.
\end{abstract}

\footnotetext{2020 \textit{Mathematics Subject Classification}:
53E10, 53C40.}
\footnotetext{{\it Key words and phrases}: Mean curvature flow, Lagrangian self-expander, Maximum principle.}

\section{introduction}
\vskip2mm
\noindent

An $n$-dimensional smooth immersed submanifold $x: M^{n}\to \mathbb{R}^{n+p}$ is called a self-expanders of the mean curvature flow if its mean curvature vector $\vec{H}$ satisfies the following non-linear elliptic equation
\begin{equation*}
\vec{H}=x^{\perp},
\end{equation*}
where where $x^{\perp}$ is the orthogonal projection of
the position vector $x$ in $\mathbb{R}^{n+p}$ to the normal bundle of $M^{n}$.
It is well known that if $x$ is a self-expanders then $x_{t}=\sqrt{2t}x, \ \ t>0$ is moved by the mean curvature flow.

Notice that the mean curvature flow always blows up at finite time. For
noncompact hypersurfaces, solution to the mean curvature flow may exist
for all times. The self-expanders appear as the singularity model
of the mean curvature flow which exists for long time. For the case of codimension one, motivation for the study of self-expanders goes back to work of Ecker-Huisken \cite{EH} and Stavrou \cite{Sta}. Specially, the rigidity theorem of the self-expanders were studied by many geometers. Specific references can be found in \cite{AC}, \cite{CZ}, \cite{H}, \cite{Ish}, \cite{Smo} and the references therein.

In addition, the properties of higher codimension self-expanders have been
studied for the Lagrangian mean curvature flow.
The first examples of Lagrangian self-expanders were constructed in Anciaux \cite{Anc} and
Lee and Wang (\cite{LW1}). In \cite{LN}, Lotay and Neves proved that zero-Maslov class Lagrangian self-expanders in $\mathbb{C}^{n}$ that are asymptotic to a pair of planes intersecting transversely are locally unique for $n>2$ and unique if $n=2$.
Later, Nakahara \cite{Nak} construct a smooth Lagrangian self-expander asymptotic to any pair of Lagrangian planes in $\mathbb{C}^{n}$ which
intersect transversely at the origin and have sum of characteristic angles less than $\pi$. In 2016, Imagi, Joyce and dos Santos \cite{IJS} shown further uniqueness results for Lagrangian self-expanders asymptotic to the union of two transverse Lagrangian planes.
More other results about the Lagrangian self-expanders have been done (see \cite{Cas}, \cite{GSSZ}, \cite{JLT}, \cite{LW}, \cite{Nev}).

Recently, Cheng, Hori and Wei \cite{CHW} established the
following classification theorem for complete Lagrangian self-shrinkers in $\mathbb{C}^{2}$:

\noindent
\begin{theorem}\label{theorem 1.1}
Let $x: M^{2}\to \mathbb{C}^{2}$ be a
$2$-dimensional complete Lagrangian self-shrinkers with the constant squared norm of the second fundamental form in $\mathbb{C}^{2}$,
then $x(M^{2})$  is one of the following surfaces:
$\mathbb {R}^{2}$, $\mathbb {S}^{1}(1)\times \mathbb {R}^{1}$ and $\mathbb {S}^{1}(1)\times \mathbb {S}^{1}(1)$.
\end{theorem}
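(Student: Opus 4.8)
The plan is to combine the special structure of the Lagrangian second fundamental form in $\mathbb{C}^{2}$ with a drift (weighted) Simons-type identity, and then read off the rigidity from the hypothesis that $|A|^{2}$ is constant. Note that the self-shrinker equation here is $\vec{H}=-x^{\perp}$, the sign-reversed analogue of the expander equation in the introduction.

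First I would fix a local orthonormal frame $e_{1},e_{2}$ for $M^{2}$ together with $Je_{1},Je_{2}$ for the normal bundle, and use the basic fact that for a Lagrangian immersion the cubic form $h^{k}_{ij}=\langle \vec{II}(e_{i},e_{j}),Je_{k}\rangle$ is fully symmetric in $i,j,k$. Writing $\vec{H}=-x^{\perp}$ in this frame turns the Codazzi equations into identities for $\nabla h$ carrying an explicit zeroth-order term from the position vector, and the natural elliptic operator becomes the drift Laplacian $\mathcal{L}=\Delta-\langle x,\nabla(\cdot)\rangle$. The first genuine computation is a Simons-type formula of the schematic shape
\begin{equation*}
\tfrac12\,\mathcal{L}|A|^{2}=|\nabla A|^{2}-|A|^{2}+\Phi(A),
\end{equation*}
where $\Phi(A)$ is a quartic expression in the components $h^{k}_{ij}$ generated by the Gauss and Ricci equations together with the symmetry of $h$.

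Next I would exploit constancy: since $|A|^{2}$ is constant, $\nabla|A|^{2}=0$ and $\Delta|A|^{2}=0$, so the left-hand side vanishes identically and the identity collapses to the pointwise relation $|\nabla A|^{2}=|A|^{2}-\Phi(A)$. Because $M$ is two-dimensional, the symmetric tensor $h$ has only a handful of independent entries; after diagonalising at a point I would expand $\Phi(A)$ in terms of $|A|^{2}$ and the remaining scalar invariants, aiming to show $\Phi(A)\ge|A|^{2}$. Since $|\nabla A|^{2}\ge 0$ as well, this would force $\nabla A\equiv 0$ together with $\Phi(A)=|A|^{2}$, and the equality analysis should in turn pin the constant $|A|^{2}$ to a small discrete set of admissible values, namely $0$, $1$ and $2$, realised respectively by the plane, the cylinder and the torus.

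Finally, with $A$ parallel and $|A|^{2}$ determined, I would use completeness to integrate the structure equations: a complete surface with parallel second fundamental form splits as a product of curves, and matching each factor against $\vec{H}=-x^{\perp}$ forces it to be either a line or the self-shrinking circle $\mathbb{S}^{1}(1)$, yielding exactly $\mathbb{R}^{2}$, $\mathbb{S}^{1}(1)\times\mathbb{R}^{1}$ and $\mathbb{S}^{1}(1)\times\mathbb{S}^{1}(1)$. I expect the main obstacle to be twofold. The first difficulty is getting the precise coefficients and the correct sign of the quartic term $\Phi(A)$, so that the constancy identity genuinely forces $\nabla A=0$ rather than only an inconclusive balance; this is where the two-dimensional Lagrangian normalisation of $h$ must be used delicately. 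The second difficulty is the noncompact case, where integrating $\mathcal{L}|A|^{2}$ against the Gaussian weight requires justification of the boundary terms; here the hypothesis that $|A|^{2}$ is constant gives $M$ bounded geometry, so I would instead invoke the generalized (Omori--Yau) maximum principle to promote the pointwise information to the global rigidity needed for the splitting.
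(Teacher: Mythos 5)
First, a structural remark: Theorem \ref{theorem 1.1} is not proved in this paper at all; it is quoted from Cheng--Hori--Wei \cite{CHW}, and what the present paper proves is the expander analogue (Theorem \ref{theorem 1.2}) by the same circle of ideas: Simons-type identities for the drift operator $\mathcal{L}$, a Bakry--Emery Ricci lower bound (Proposition \ref{proposition 3.1}), boundedness of $H^{2}$ via the Lagrangian pointwise inequality $H^{2}\le\tfrac{4}{3}S$, and the Omori--Yau maximum principle applied to $H^{2}$, followed by a case analysis along the limiting sequence. Your proposal should therefore be measured against that method, and it diverges from it exactly at the point where the real work lies.

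The genuine gap is your middle step. You want constancy of $|A|^{2}$ to collapse the Simons identity to a pointwise relation and then to prove a pointwise algebraic inequality on the quartic term that forces $\nabla A\equiv 0$. Two problems. First, your sign is the expander one: for shrinkers the linear term is $+|A|^{2}$, not $-|A|^{2}$ (compare \eqref{2.1-13}--\eqref{2.1-14}, whose linear terms flip sign between the two equations $\vec{H}=\pm x^{\perp}$), so constancy gives $|\nabla A|^{2}=Q-S$ with $Q$ quartic, and you would need $Q\le S$. Second, and fatally, no such inequality holds as pointwise algebra on Lagrangian second fundamental forms: writing the shrinker version of \eqref{2.1-14}, one has $Q=\tfrac{3}{2}S^{2}-2H^{2}S+\tfrac{1}{2}H^{4}+\sum_{i,j,p,q}H^{p^{\ast}}h^{p^{\ast}}_{ij}H^{q^{\ast}}h^{q^{\ast}}_{ij}$, and the admissible configuration $h^{1^{\ast}}_{11}=a$ with all other independent components zero gives $Q=S^{2}$, which exceeds $S$ as soon as $a^{2}>1$. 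Moreover the desired inequality must be \emph{exactly} saturated by the cylinder ($S=1$) and the torus ($S=2$), so no crude estimate can produce it; ruling out the bad configurations requires global input from the shrinker equation. That global input is precisely what your sketch relegates to an afterthought: the Omori--Yau principle is not a device to ``promote'' a pointwise inequality you already possess (if you had it, you would not need the principle); it is the \emph{substitute} for the missing pointwise inequality. One applies it to $H^{2}$, extracts convergent limits of $h^{p^{\ast}}_{ij}$, $h^{p^{\ast}}_{ijk}$, $h^{p^{\ast}}_{ijkl}$ along the sequence using the higher-order identities (the shrinker analogues of \eqref{2.1-15}--\eqref{2.1-16}), and runs a lengthy elimination over the limiting values --- which is why the proof in \cite{CHW} occupies some fifty pages. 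Your final step (parallel $A$ plus completeness gives the splitting into $\mathbb{R}^{2}$, $\mathbb{S}^{1}(1)\times\mathbb{R}^{1}$, $\mathbb{S}^{1}(1)\times\mathbb{S}^{1}(1)$) is standard and fine, but it is conditional on $\nabla A\equiv 0$, which your argument does not actually deliver.
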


\vskip2mm

Motivated by the above theorem, we aim in this paper to study the complete
Lagrangian self-expander in $\mathbb{C}^{2}$. As the result, we obtain the following result:

\begin{theorem}\label{theorem 1.2}
Let $x: M^{2}\to \mathbb C^{2}$ be a
$2$-dimensional complete Lagrangian self-expander with constant squared norm of the second fundamental form, then $x(M^{2})$ is a hyperplane
$\mathbb {R}^{2}$ through the origin.
\end{theorem}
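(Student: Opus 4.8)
The plan is to work with the drift Laplacian adapted to expanders and to establish Simons-type identities whose reaction terms carry the sign forced by the equation $\vec H=x^{\perp}$, and then to close with a maximum-principle argument on the complete surface. First I would fix a local orthonormal frame $e_1,e_2$ for $TM^{2}$, so that $Je_1,Je_2$ frames the normal bundle, and record the two structural facts that drive everything: the Lagrangian condition makes the cubic form $h_{ijk}=\langle\bar\nabla_{e_i}e_j,Je_k\rangle$ fully symmetric in $i,j,k$, while Codazzi in the flat ambient space makes $\nabla_l h_{ijk}$ fully symmetric in all four indices; and the self-expander equation reads componentwise $H_k=\langle x,Je_k\rangle$, where $H_k=\sum_i h_{iik}$. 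Writing $u_m=\langle x,e_m\rangle$ for the tangential position and $\mathcal L=\Delta+\langle x^{\top},\nabla\,\cdot\,\rangle$ for the drift operator (formally self-adjoint for the weight $e^{|x|^2/2}$), a single differentiation gives the first-order identity $\nabla_l H_k=-\sum_m h_{klm}u_m$, which is the engine behind the cancellations below.

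Next I would compute $\mathcal L$ on the two natural quantities. For the mean curvature one finds, after the drift term cancels the position-vector term produced by $\Delta$, the clean identity
\begin{equation*}
\mathcal L H_k=-H_k-\sum_{p}\Big(\sum_{l,m}h_{klm}h_{pml}\Big)H_p,
\end{equation*}
so that $\tfrac12\mathcal L|\vec H|^2=-|\vec H|^2-\mathcal P+|\nabla\vec H|^2$ with $\mathcal P=\sum_{l,m}\big(\sum_k h_{klm}H_k\big)^2\ge 0$. A Simons-type computation for $h_{ijk}$ (Codazzi to rewrite $\Delta h_{ijk}$ as $\nabla_i\nabla_j H_k$ plus the intrinsic curvature terms, then the Gauss equation $2K=|\vec H|^2-S$ in dimension two) yields, again after the drift cancellation,
\begin{equation*}
\tfrac12\mathcal L S=|\nabla A|^2-S+\mathcal R,
\end{equation*}
where $\mathcal R$ is an explicit quartic expression in the four independent components of $h$. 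The decisive feature is the linear term $-S$: it appears with the sign opposite to the self-shrinker case, and this is exactly why the three shrinker examples have no expander analogues.

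Finally, since $S$ is constant, $\mathcal L S=0$ and $\nabla S=0$, so the second identity becomes the pointwise relation $|\nabla A|^2=S-\mathcal R$. I would then exploit the constancy of $S$ together with completeness: the resulting bounded geometry lets the generalized (Omori--Yau) maximum principle act on $|\vec H|^2$, whose $\mathcal L$-identity has the favourable leading sign $-|\vec H|^2$; combined with the algebraic reduction of $\mathcal R$ in the two-dimensional fully symmetric setting, this should force $S\equiv 0$. Then $A\equiv 0$, so $M^{2}$ is totally geodesic, i.e.\ an affine plane, and the expander equation degenerates to $x^{\perp}=0$, meaning the position vector is everywhere tangent; hence the plane passes through the origin and $x(M^{2})=\mathbb R^{2}$.

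The main obstacle I anticipate is precisely this last step. Because $S$ is only assumed constant (not $A$ parallel), $|\nabla A|^2$ need not vanish, so unlike the one-dimensional model one cannot read off $S=0$ from the pointwise relation alone. Controlling the quartic $\mathcal R$ and handling the noncompactness simultaneously---showing that the gradient terms $|\nabla A|^2$ and $|\nabla\vec H|^2$ cannot conspire to balance the reaction except when $S=0$---is the technical heart, and is where the explicit structure of the symmetric cubic form in two dimensions and a carefully chosen test function for the maximum principle must be brought to bear.
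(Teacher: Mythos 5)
Your structural identities are all correct and coincide with the paper's own setup: the drift-cancellation computation giving $\mathcal{L}H^{p^{\ast}}=-H^{p^{\ast}}-\sum_{i,k,q}h^{p^{\ast}}_{ik}h^{q^{\ast}}_{ki}H^{q^{\ast}}$ and $\tfrac12\mathcal{L}H^{2}=|\nabla^{\perp}\vec H|^{2}-H^{2}-\sum_{i,j,p,q}H^{p^{\ast}}h^{p^{\ast}}_{ij}H^{q^{\ast}}h^{q^{\ast}}_{ij}$ is exactly Lemma 2.2; the Simons-type formula with the linear term $-S$ is (2.1-14); and the Omori--Yau input is justified exactly as you suggest, via the Bakry--Emery lower bound $Ric_{V}\geq -(S+1)$ (Proposition 3.1) together with the Lagrangian pointwise bound $H^{2}\leq\tfrac43 S$, which makes $H^{2}$ bounded so the principle applies. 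But what you have written is a strategy, not a proof: the entire content of the theorem lies in the step you explicitly defer (``this should force $S\equiv0$\ldots is the technical heart''). In the paper this step is Proposition 3.2, and it is not soft: one assumes $\sup H^{2}>0$, extracts along the Omori--Yau sequence $\{p_{t}\}$ limits $\bar h^{p^{\ast}}_{ij}$, $\bar h^{p^{\ast}}_{ijk}$, $\bar h^{p^{\ast}}_{ijkl}$ (bounded precisely because $S$ is constant, via (2.1-14)--(2.1-16)), chooses a frame in which $\vec H=He_{1^{\ast}}$ so that the second fundamental form reduces to three numbers $\lambda_{1}=h^{1^{\ast}}_{11}$, $\lambda_{2}=h^{1^{\ast}}_{22}$, $\lambda=h^{2^{\ast}}_{11}$, and then runs an exhaustive case analysis (first proving the claim $\bar\lambda=0$ by splitting on $\bar\lambda_{1}\bar\lambda_{2}-\bar\lambda^{2}$, then splitting on $\bar\lambda_{1}=0$, $\bar\lambda_{2}=0$, $\bar\lambda_{1}\bar\lambda_{2}\neq0$, the last splitting again on $\bar\lambda_{1}-3\bar\lambda_{2}$). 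Each case combines the vanishing of $\nabla H^{2}$ in the limit, the differentiated constancy relations $\sum_{i,j,p}h^{p^{\ast}}_{ij}h^{p^{\ast}}_{ijk}=0$ and $\sum_{i,j,p}h^{p^{\ast}}_{ij}h^{p^{\ast}}_{ijkl}+\sum_{i,j,p}h^{p^{\ast}}_{ijk}h^{p^{\ast}}_{ijl}=0$, and the limiting inequality $\mathcal{L}H^{2}\leq0$ to reach a contradiction; in the hardest case ($\bar\lambda_{1}=3\bar\lambda_{2}$, where $H^{2}=\tfrac43 S$ is the equality case of your upper bound) even the fourth-order quantity $\bar h^{1^{\ast}}_{1111}+\bar h^{1^{\ast}}_{2211}$ must be computed from (2.1-12). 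None of this is present in, or follows routinely from, your outline.

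A second, smaller gap: your endgame conflates two different statements. The maximum-principle argument yields $H^{2}\equiv0$, not $S\equiv0$, and $H\equiv0$ does not by itself give total geodesy. The paper closes with a separate pointwise argument: $H^{p^{\ast}}=\langle x,e_{p^{\ast}}\rangle=0$ together with $H^{p^{\ast}}_{,i}=-\sum_{k}h^{p^{\ast}}_{ik}\langle x,e_{k}\rangle=0$ and the Lagrangian symmetries gives $\bigl((h^{1^{\ast}}_{11})^{2}+(h^{2^{\ast}}_{11})^{2}\bigr)\langle x,e_{k}\rangle=0$, while $\langle x,e_{k}\rangle=0$ would force $0=\langle x,e_{k}\rangle_{,k}=1+\sum_{p}h^{p^{\ast}}_{kk}\langle x,e_{p^{\ast}}\rangle=1$, a contradiction; hence the tangential position is nowhere zero, the second fundamental form vanishes identically, and the resulting plane passes through the origin because $x^{\perp}=\vec H=0$. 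You would need to supply this argument (or an equivalent) in addition to the missing case analysis above.
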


\vskip5mm
\section {Preliminaries}
\vskip2mm

\noindent

Let $x: M^{2} \rightarrow\mathbb C^{2}$ be an
$2$-dimensional Lagrangian surface of $\mathbb C^{2}$. Denote
by $J$ the canonical complex structure on $\mathbb C^{2}$.
We choose orthonormal tangent vector fields $\{e_{1}, e_{2}\}$ and $\{e_{1^{\ast}}, e_{2^{\ast}}\}$ are normal vector fields given by
$$e_{1^{\ast}}=J e_{1}, \ e_{2^{\ast}}=Je_{2}.$$
Then
$$\{e_{1}, e_{2}, e_{1^{\ast}}, e_{2^{\ast}}\}$$
is called an adapted Lagrangian frame field. The dual frame fields of $\{e_{1}, e_{2}\}$ are $\{\omega_{1}, \omega_{2}\}$, the Levi-Civita connection forms and normal
connection forms are $\omega_{ij}$ and $\omega_{i^{\ast}j^{\ast}}$ , respectively.

Since $x: M^{2} \rightarrow\mathbb C^{2}$ is a Lagrangian surface (see \cite{LV}, \cite{LW2}), we have
\begin{equation}\label{2.1-1}
h^{p^{\ast}}_{ij}=h^{p^{\ast}}_{ji}=h^{i^{\ast}}_{pj}, \ \ i,j,p=1,2.
\end{equation}
The second fundamental form $h$ and the mean curvature $\vec{H}$ of $x$ are respectively
defined by $$h=\sum_{ijp}h^{p^{\ast}}_{ij}\omega_{i}\otimes\omega_{j}\otimes e_{p^{\ast}},\ \ \vec{H}=\sum_{p}H^{p^{\ast}}e_{p^{\ast}}=\sum_{i,p}h^{p^{\ast}}_{ii}e_{p^{\ast}}.$$
Let $S=\sum_{i,j,p}(h^{p^{\ast}}_{ij})^{2}$ be the squared norm of the second
fundamental form and $H=|\vec{H}|$ denote the mean curvature of $x$. If we denote the components of curvature tensors of the Levi-Civita connection forms $\omega_{ij}$ and normal connection forms $\omega_{i^{\ast}j^{\ast}}$ by
$R_{ijkl}$ and $R_{i^{\ast}j^{\ast}kl}$, respectively, then for $i, j, k, l, p, q=1, 2$, the equations of Gauss, Codazzi and Ricci are given by
\begin{equation}\label{2.1-2}
R_{ijkl}=\sum_{p}(h^{p^{\ast}}_{ik}h^{p^{\ast}}_{jl}-h^{p^{\ast}}_{il}h^{p^{\ast}}_{jk}),
\end{equation}
\begin{equation}\label{2.1-3}
R_{ik}=\sum_{p}H^{p^{\ast}}h^{p^{\ast}}_{ik}-\sum_{j,p}h^{p^{\ast}}_{ij}h^{p^{\ast}}_{jk},
\end{equation}
\begin{equation}\label{2.1-4}
h^{p^{\ast}}_{ijk}=h^{p^{\ast}}_{ikj},
\end{equation}
\begin{equation}\label{2.1-5}
R_{p^{\ast}q^{\ast}kl}=\sum_{i}(h^{p^{\ast}}_{ik}h^{q^{\ast}}_{il}
-h^{p^{\ast}}_{il}h^{p^{\ast}}_{ik}),
\end{equation}
\begin{equation}\label{2.1-6}
R=H^{2}-S.
\end{equation}

From \eqref{2.1-1} and \eqref{2.1-4}, we easily know that the components $h^{p^{\ast}}_{ijk}$ is totally symmetric for $i, j, k,l$. In particular,
\begin{equation}\label{2.1-7}
h^{p^{\ast}}_{ijk}=h^{p^{\ast}}_{kji}=h^{i^{\ast}}_{pjk}, \ \ i, j, k ,p=1, 2.
\end{equation}
By making use of \eqref{2.1-1}, \eqref{2.1-2} and \eqref{2.1-5}, we obtain
\begin{equation}\label{2.1-8}
R_{ijkl}=K(\delta_{ik}\delta_{jl}-\delta_{il}\delta_{jk})=R_{i^{\ast}j^{\ast}kl}, \ \ K=\frac{1}{2}(H^{2}-S),
\end{equation}
where $K$ is the Gaussian curvature of $x$.

\noindent By defining
\begin{equation*}
\sum_{l}h^{p^{\ast}}_{ijkl}\omega_{l}=dh^{p^{\ast}}_{ijk}+\sum_{l}h^{p^{\ast}}_{ljk}\omega_{li}
+\sum_{l}h^{p^{\ast}}_{ilk}\omega_{lj}+\sum_{l} h^{p^{\ast}}_{ijl}\omega_{lk}+\sum_{q} h^{q^{\ast}}_{ijk}\omega_{q^{\ast}p^{\ast}},
\end{equation*}
and
\begin{equation*}
\begin{aligned}
\sum_{m}h^{p^{\ast}}_{ijklm}\omega_{m}&=dh^{p^{\ast}}_{ijkl}+\sum_{m}h^{p^{\ast}}_{mjkl}\omega_{mi}
+\sum_{m}h^{p^{\ast}}_{imkl}\omega_{mj}+\sum_{m}h^{p^{\ast}}_{ijml}\omega_{mk}\\
&\ \ +\sum_{m}h^{p^{\ast}}_{ijkm}\omega_{ml}+\sum_{m}h^{m^{\ast}}_{ijkl}\omega_{m^{\ast}p^{\ast}},
\end{aligned}
\end{equation*}
we have the following Ricci identities
\begin{equation}\label{2.1-9}
h^{p^{\ast}}_{ijkl}-h^{p^{\ast}}_{ijlk}=\sum_{m}
h^{p^{\ast}}_{mj}R_{mikl}+\sum_{m} h^{p^{\ast}}_{im}R_{mjkl}+\sum_{m} h^{m^{\ast}}_{ij}R_{m^{\ast}p^{\ast}kl},
\end{equation}
and
\begin{equation}\label{2.1-10}
\begin{aligned}
h^{p^{\ast}}_{ijkln}-h^{p^{\ast}}_{ijknl}=&\sum_{m} h^{p^{\ast}}_{mjk}R_{miln}
+\sum_{m}h^{p^{\ast}}_{imk}R_{mjln}+ \sum_{m}h^{p^{\ast}}_{ijm}R_{mkln}\\
&+\sum_{m}h^{m^{\ast}}_{ijk}R_{m^{\ast}p^{\ast}ln}.
\end{aligned}
\end{equation}

Let $V$ be a tangent $C^{1}$-vector field on $M^{n}$ and denote by $Ric_{V} := Ric-\frac{1}{2}L_{V}g$ the
Bakry-Emery Ricci tensor with $L_{V}$ to be the Lie derivative along the vector field $V$. Define a
differential operator
\begin{equation*}
\mathcal{L}f=\Delta f+\langle V,\nabla f\rangle,
\end{equation*}
where $\Delta$ and $\nabla$ denote the Laplacian and the gradient
operator, respectively. The following maximum principle
of Omori-Yau type concerning the operator $\mathcal{L}$ will
be used in this paper, which was proved by Chen and Qiu \cite{CQ}.

\begin{lemma}\label{lemma 2.1}
Let $(M^{n}, g)$ be a complete Riemannian manifold, and $V$ is a $C^{1}$ vector field on $M^{n}$. If the Bakry-Emery Ricci tensor $Ric_{V}$ is bounded from below, then for any $f\in C^{2}(M^{n})$ bounded from above, there exists a sequence $\{p_{t}\} \subset M^{n}$, such that
\begin{equation*}
\lim_{m\rightarrow\infty} f(p_{t})=\sup f,\quad
\lim_{m\rightarrow\infty} |\nabla f|(p_{t})=0,\quad
\lim_{m\rightarrow\infty}\mathcal{L}f(p_{t})\leq 0.
\end{equation*}
\end{lemma}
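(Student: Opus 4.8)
The plan is to run a maximum-principle argument for the drift Laplacian $\mathcal{L}f=\Delta f+\langle x^{\top},\nabla f\rangle$ naturally attached to a self-expander, i.e. taking $V=x^{\top}$ (the tangential part of the position vector, the sign being opposite to the self-shrinker case underlying Theorem~\ref{theorem 1.1}). First I would record the first-order consequence of $\vec{H}=x^{\perp}$: differentiating and projecting onto the normal bundle gives $H^{p^{\ast}}_{,i}=-\sum_{j}h^{p^{\ast}}_{ij}\langle x,e_{j}\rangle$, and $\nabla_{i}\langle x,e_{j}\rangle=\delta_{ij}+\sum_{p}H^{p^{\ast}}h^{p^{\ast}}_{ij}$, so one further covariant derivative yields $H^{p^{\ast}}_{,ij}=-\sum_{k}h^{p^{\ast}}_{ijk}\langle x,e_{k}\rangle-h^{p^{\ast}}_{ij}-\sum_{k,q}h^{p^{\ast}}_{ik}H^{q^{\ast}}h^{q^{\ast}}_{jk}$. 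Using \eqref{2.1-3}, the Bakry--Emery tensor then collapses to $(Ric_{V})_{ik}=-\delta_{ik}-\sum_{j,p}h^{p^{\ast}}_{ij}h^{p^{\ast}}_{jk}$, whose last term is positive semidefinite with trace the constant $S$; hence $Ric_{V}\ge-(1+S)g$ is bounded below and Lemma~\ref{lemma 2.1} is available for every $f$ bounded above.

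Next I would derive two Simons-type identities. Contracting the formula for $H^{p^{\ast}}_{,ij}$ with $h^{p^{\ast}}_{ij}$ and feeding the curvature \eqref{2.1-8} into the commutations \eqref{2.1-9}--\eqref{2.1-10}, I expect
\[
\tfrac12\mathcal{L}S=|\nabla h|^{2}-S-\sum_{i,j,k,p,q}h^{p^{\ast}}_{ij}h^{p^{\ast}}_{ik}H^{q^{\ast}}h^{q^{\ast}}_{jk}+K(3S-2H^{2}),
\]
while tracing the corresponding identity for $\mathcal{L}H^{p^{\ast}}$ gives the cleaner
\[
\tfrac12\mathcal{L}H^{2}=|\nabla H|^{2}-H^{2}-W,\qquad W=\sum_{i,j}\Big(\sum_{p}H^{p^{\ast}}h^{p^{\ast}}_{ij}\Big)^{2}\ge\tfrac12H^{4}\ge0.
\]
The favourable signs of $-S$ and $-H^{2}$ are exactly the expander analogue of the shrinker computation. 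Since $S$ is constant, $\mathcal{L}S=0$ turns the first identity into a pointwise relation pinning down $|\nabla h|^{2}$, and since $H^{2}\le nS=2S$ by Cauchy--Schwarz, $H^{2}$ is a bounded function.

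The heart of the argument is to apply Lemma~\ref{lemma 2.1} to $f=H^{2}$: along the resulting sequence $\{p_{t}\}$ one has $H^{2}(p_{t})\to\sup H^{2}$, $|\nabla H^{2}|(p_{t})\to0$ and $\limsup_{t}\mathcal{L}H^{2}(p_{t})\le0$, and I want to conclude $\sup H^{2}=0$, i.e. $\vec{H}\equiv0$. Once this is known, $\vec{H}=x^{\perp}$ forces $x^{\perp}\equiv0$, so the position vector is everywhere tangent and $x(M^{2})$ is a cone with vertex at the origin; but under the dilation $x\mapsto\lambda x$ such a cone is preserved while $S$ rescales by $\lambda^{-2}$, so the constancy of $S$ forces $S\equiv0$. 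Hence $x(M^{2})$ is totally geodesic, i.e. a plane $\mathbb{R}^{2}$ through the origin, which is the assertion.

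The main obstacle is precisely the gradient term $|\nabla H|^{2}$ in $\tfrac12\mathcal{L}H^{2}=|\nabla H|^{2}-H^{2}-W$: the Omori--Yau sequence delivers only $|\nabla H^{2}|\to0$, which in codimension two controls just the component of $\nabla\vec{H}$ parallel to $\vec{H}$ and not the full $|\nabla H|^{2}$. Closing this gap is where the hypotheses must be combined: I would exploit the constancy of $S$ (which bounds $|\nabla h|^{2}$, and with it $|\nabla H|^{2}\le n|\nabla h|^{2}$, via the first Simons identity), the lower bound $W\ge\tfrac12H^{4}$, and the fact that by \eqref{2.1-4} and \eqref{2.1-7} the tensor $h^{p^{\ast}}_{ijk}$ is totally symmetric, so that $\nabla h$ is a symmetric $4$-tensor on a surface, in order to show that the part of $\nabla\vec{H}$ perpendicular to $\vec{H}$ cannot survive at the supremum of $H^{2}$. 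I expect this combined algebraic and maximum-principle step, rather than any single identity, to be the crux of the proof.
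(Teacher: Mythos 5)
Your proposal does not actually address the statement it was supposed to prove. Lemma \ref{lemma 2.1} is a general Omori--Yau type maximum principle: on \emph{any} complete Riemannian manifold $(M^{n},g)$ with an arbitrary $C^{1}$ vector field $V$ such that $Ric_{V}=Ric-\frac{1}{2}L_{V}g$ is bounded below, every $C^{2}$ function bounded above admits a sequence $\{p_{t}\}$ along which $f\to\sup f$, $|\nabla f|\to 0$ and $\limsup\mathcal{L}f\le 0$. Nothing in the lemma refers to self-expanders, to the second fundamental form, or to $\mathbb{C}^{2}$. What you have written is instead (i) a verification that for a self-expander with $V=x^{\top}$ the tensor $Ric_{V}$ is bounded below --- which is the paper's Proposition \ref{proposition 3.1}, not Lemma \ref{lemma 2.1} --- followed by (ii) a sketch of how the lemma is \emph{applied} to $f=H^{2}$ to prove Proposition \ref{proposition 3.2} and Theorem \ref{theorem 1.2}. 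Assuming the lemma ``is available'' and then using it is circular as a proof of the lemma itself: you never construct the sequence $\{p_{t}\}$, and none of the ingredients such a construction requires appears in your text --- no comparison estimate for $\mathcal{L}r$ ($r$ the distance function) derived from the $Ric_{V}$ lower bound via the Bochner/second-variation argument for the drift operator, no Omori--Yau exhaustion with auxiliary test functions of the form $f-\epsilon\,\varphi(r)$, no Calabi-type trick to handle the cut locus. For the record, the paper does not reprove the lemma either: it quotes it from Chen and Qiu \cite{CQ}, so the expected content here is either that citation or a reproduction of their argument along the lines just indicated.

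Beyond the mismatch of target, the application you sketch is also not self-contained: you concede at the end that the ``crux'' --- controlling the full $|\nabla^{\perp}\vec{H}|^{2}$ when the Omori--Yau sequence only gives $|\nabla H^{2}|\to 0$ --- is left open, whereas the paper closes exactly this gap in Proposition \ref{proposition 3.2} by a case analysis on the limits $\bar\lambda,\bar\lambda_{1},\bar\lambda_{2}$ using the Lagrangian symmetries \eqref{2.1-1}, \eqref{2.1-7} and the derived identities \eqref{3.1-1}--\eqref{3.1-19}; the paper also obtains the sharper Lagrangian bound $H^{2}\le\frac{4}{3}S$ rather than your $H^{2}\le 2S$, and its final step from $H^{2}\equiv 0$ to a plane is a direct pointwise argument from \eqref{2.1-12}, not the cone-dilation argument you propose (which would additionally need smoothness of the cone at the vertex). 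But the decisive defect is the first one: as a proof of Lemma \ref{lemma 2.1}, the proposal is empty.
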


For the mean curvature vector field $\vec{H}=\sum_{p}H^{p^{\ast}}e_{p^{\ast}}$, we define
\begin{equation}\label{2.1-11}
\begin{aligned}
|\nabla^{\perp}\vec{H}|^{2}=\sum_{i,p}(H^{p^{\ast}}_{,i})^{2}, \ \ \Delta^{\perp}H^{p^{\ast}}=\sum_{i}H^{p^{\ast}}_{,ii}.
\end{aligned}
\end{equation}
We next suppose that  $x: M^{2}\rightarrow\mathbb{C}^{2}$
is a lagrangian self-expander, that is, $H^{p^{\ast}}=\langle x, e_{p^{\ast}}\rangle$. By a simple calculation, we have the following basic formulas.

\begin{equation}\label{2.1-12}
\aligned
H^{p^{\ast}}_{,i}
=&-\sum_{k}h^{p^{\ast}}_{ik}\langle x, e_{k}\rangle, \\
H^{p^{\ast}}_{,ij}
=&-\sum_{k}h^{p^{\ast}}_{ijk}\langle x, e_{k}\rangle-h^{p^{\ast}}_{ij}-\sum_{k,q}h^{p^{\ast}}_{ik}h^{q^{\ast}}_{kj}H^{q^{\ast}},  \ \ i,j,p =1,2.
\endaligned
\end{equation}

\noindent
For $V=x^{\top}$, using the above formulas and the Ricci identities, we can get the following Lemmas. The specific calculation process is similar to \cite{CHW}.

\begin{lemma}\label{lemma 2.2}
Let $x:M^{2}\rightarrow \mathbb{C}^{2}$ be an $2$-dimensional complete lagrangian self-expander. We have
\begin{equation}\label{2.1-13}
\aligned
\frac{1}{2}\mathcal{L} H^{2}=&|\nabla^{\perp} \vec{H}|^{2}-H^{2}-\sum_{i,j,p,q}H^{p^{\ast}}h^{p^{\ast}}_{ij}H^{q^{\ast}}h^{q^{\ast}}_{ij}
\endaligned
\end{equation}
and
\begin{equation}\label{2.1-14}
\frac{1}{2}\mathcal{L}S
=\sum_{i,j,k}(h^{p^{\ast}}_{ijk})^{2}-S(\frac{3}{2}S+1)+2H^{2}S-\frac{1}{2}H^{4}
-\sum_{i,j,p,q}H^{p^{\ast}}h^{p^{\ast}}_{ij}H^{q^{\ast}}h^{q^{\ast}}_{ij}.
\end{equation}
\end{lemma}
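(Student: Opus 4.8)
The plan is to prove both identities by the same Bochner/Simons-type bookkeeping, exploiting the fact that the drift vector $V=x^{\top}$ is tailored precisely so that every term carrying an explicit factor $\langle x,e_{k}\rangle$ cancels. I begin with \eqref{2.1-13}. Writing $H^{2}=\sum_{p}(H^{p^{\ast}})^{2}$ and differentiating twice gives
\begin{equation*}
\tfrac{1}{2}\Delta H^{2}=\sum_{i,p}(H^{p^{\ast}}_{,i})^{2}+\sum_{i,p}H^{p^{\ast}}H^{p^{\ast}}_{,ii}=|\nabla^{\perp}\vec{H}|^{2}+\sum_{i,p}H^{p^{\ast}}H^{p^{\ast}}_{,ii}.
\end{equation*}
Into the last sum I substitute the second line of \eqref{2.1-12}; using the Codazzi symmetry \eqref{2.1-7} in the form $\sum_{i}h^{p^{\ast}}_{iik}=H^{p^{\ast}}_{,k}$ produces exactly the term $-\sum_{k,p}H^{p^{\ast}}H^{p^{\ast}}_{,k}\langle x,e_{k}\rangle$, which equals $-\tfrac{1}{2}\langle x^{\top},\nabla H^{2}\rangle$ and is therefore annihilated by the drift term of $\mathcal{L}$. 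What survives is $-H^{2}$ (from the factor $-h^{p^{\ast}}_{ij}$) together with $-\sum_{i,j,p,q}H^{p^{\ast}}h^{p^{\ast}}_{ij}H^{q^{\ast}}h^{q^{\ast}}_{ij}$, which is \eqref{2.1-13}.

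For \eqref{2.1-14} I would start from
\begin{equation*}
\tfrac{1}{2}\Delta S=\sum_{i,j,k,p}(h^{p^{\ast}}_{ijk})^{2}+\sum_{i,j,p}h^{p^{\ast}}_{ij}\,\Delta h^{p^{\ast}}_{ij},
\end{equation*}
so the core task is a Simons-type formula for $\Delta h^{p^{\ast}}_{ij}=\sum_{k}h^{p^{\ast}}_{ijkk}$. Using the total symmetry of $h^{p^{\ast}}_{ijk}$ to rewrite $\sum_{k}h^{p^{\ast}}_{ijkk}=\sum_{k}h^{p^{\ast}}_{ikjk}$ and then commuting the last two covariant derivatives via the Ricci identity \eqref{2.1-9}, I reach $\Delta h^{p^{\ast}}_{ij}=H^{p^{\ast}}_{,ij}+(\text{curvature terms})$. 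Substituting the self-expander expression for $H^{p^{\ast}}_{,ij}$ from \eqref{2.1-12} and the Gauss equation \eqref{2.1-8}, which turns every curvature factor into $K=\tfrac{1}{2}(H^{2}-S)$ times Kronecker deltas, the explicit position term $-\sum_{k}h^{p^{\ast}}_{ijk}\langle x,e_{k}\rangle$ again cancels against $\tfrac{1}{2}\langle x^{\top},\nabla S\rangle=\sum_{i,j,k,p}h^{p^{\ast}}_{ij}h^{p^{\ast}}_{ijk}\langle x,e_{k}\rangle$.

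After contracting with $h^{p^{\ast}}_{ij}$ and using the Lagrangian symmetries \eqref{2.1-1}, \eqref{2.1-7} to evaluate traces (e.g.\ $\sum_{m}h^{m^{\ast}}_{im}=H^{i^{\ast}}$), the curvature terms collapse to $K(3S-2H^{2})=\tfrac{5}{2}H^{2}S-H^{4}-\tfrac{3}{2}S^{2}$, and one is left with the cubic remainder $T:=\sum_{i,j,k,p,q}h^{p^{\ast}}_{ij}h^{p^{\ast}}_{ik}h^{q^{\ast}}_{kj}H^{q^{\ast}}$ plus the quadratic term $-S$. Evaluating $T$ is the crux: viewing $A^{p}:=(h^{p^{\ast}}_{ij})$ as symmetric $2\times 2$ matrices one has $T=\sum_{p,q}H^{q^{\ast}}\,\mathrm{tr}\big((A^{p})^{2}A^{q}\big)$, and the Cayley--Hamilton identity $(A^{p})^{2}=H^{p^{\ast}}A^{p}-\det(A^{p})I$ together with $\sum_{p}\det(A^{p})=\tfrac{1}{2}(H^{2}-S)$ yields
\begin{equation*}
T=\sum_{i,j,p,q}H^{p^{\ast}}h^{p^{\ast}}_{ij}H^{q^{\ast}}h^{q^{\ast}}_{ij}-\tfrac{1}{2}H^{4}+\tfrac{1}{2}H^{2}S.
\end{equation*}
Combining this with the curvature contribution and the $-S$ term produces precisely \eqref{2.1-14}.

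I expect the main obstacle to be the Simons-type commutation and the attendant index bookkeeping: correctly reducing $\sum_{k}h^{p^{\ast}}_{ijkk}$ to $H^{p^{\ast}}_{,ij}$ plus curvature, and then contracting the three distinct curvature pieces against $h^{p^{\ast}}_{ij}$ without sign errors. The low dimension $n=2$ is what rescues the computation twice over --- first in collapsing the Ricci contractions, and decisively in the Cayley--Hamilton evaluation of the cubic term $T$ --- so the whole argument hinges on keeping the Lagrangian symmetry relations and the two-dimensionality in play throughout; this parallels the calculation in \cite{CHW}.
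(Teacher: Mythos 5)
Your computation is correct: I checked that the drift term $\tfrac{1}{2}\langle x^{\top},\nabla H^{2}\rangle$ (resp.\ $\tfrac{1}{2}\langle x^{\top},\nabla S\rangle$) exactly cancels the position terms coming from \eqref{2.1-12}, that the three Ricci-identity contractions collapse in dimension two to $K(3S-2H^{2})=\tfrac{5}{2}H^{2}S-H^{4}-\tfrac{3}{2}S^{2}$, and that your Cayley--Hamilton evaluation of the cubic term $T$ (using $\sum_{p}\det A^{p}=\tfrac{1}{2}(H^{2}-S)$, which also follows from the Gauss equation \eqref{2.1-8}) yields precisely \eqref{2.1-13} and \eqref{2.1-14}. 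The paper offers no proof of this lemma beyond deferring to the analogous calculation in \cite{CHW}, and your argument is essentially that same Bochner/Simons-type computation, with the Cayley--Hamilton step a clean two-dimensional packaging of the cubic-term bookkeeping.
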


\begin{lemma}\label{lemma 2.3}
Let $x:M^{2}\rightarrow \mathbb{C}^{2}$ be an $2$-dimensional complete lagrangian self-expander. If $S$ is constant, we infer that
\begin{equation}\label{2.1-15}
\aligned
&\frac{1}{2}\mathcal{L}\sum_{i,j,k}(h^{p^{\ast}}_{ijk})^{2}\\
=&\sum_{i,j,k,l,p}(h^{p^{\ast}}_{ijkl})^{2}+(10K-2)\sum_{i,j,k,p}(h^{p^{\ast}}_{ijk})^{2}-5K|\nabla^{\perp} \vec{H}|^{2}-\langle\nabla K, \nabla H^{2}\rangle\\
&-3\sum_{i,l,p}K_{,i}H^{p^{\ast}}_{,l}h^{p^{\ast}}_{il}
-2\sum_{i,j,k,l,p,q}h^{p^{\ast}}_{ijk}h^{p^{\ast}}_{ijl}h^{q^{\ast}}_{kl}H^{q^{\ast}}
-\sum_{i,j,k,l,p,q}h^{p^{\ast}}_{ijk}h^{p^{\ast}}_{il}h^{q^{\ast}}_{jl}H^{q^{\ast}}_{,k}\\
&-\sum_{i,j,k,l,p,q}h^{p^{\ast}}_{il}h^{p^{\ast}}_{ijk}h^{q^{\ast}}_{jkl}H^{q^{\ast}}
\endaligned
\end{equation}
and
\begin{equation}\label{2.1-16}
\aligned
&\frac{1}{2}\mathcal{L}\sum_{i,j,k}(h^{p^{\ast}}_{ijk})^{2}\\
=&(H^{2}-2S)\Big(|\nabla^{\perp} \vec{H}|^{2}-H^{2}\Big)+\frac{1}{2}|\nabla H^{2}|^{2}\\
&+(3K-2-H^{2}+2S)\sum_{j,k,p,q}H^{p^{\ast}}h^{p^{\ast}}_{jk}H^{q^{\ast}}h^{q^{\ast}}_{jk}\\
&-K\Big(H^{4}+\sum_{j,k,p}h^{p^{\ast}}_{jk}H^{p^{\ast}}H^{j^{\ast}}H^{k^{\ast}} \Big)
-\sum_{i,j,k,l,p,q}H^{p^{\ast}}h^{p^{\ast}}_{ij}h^{q^{\ast}}_{ij}h^{q^{\ast}}_{kl}h^{m^{\ast}}_{kl}H^{m^{\ast}}\\
&-\sum_{j,k,l,p,q}H^{p^{\ast}}h^{p^{\ast}}_{jk}H^{q^{\ast}}h^{q^{\ast}}_{jl}H^{m^{\ast}}h^{m^{\ast}}_{kl}
+2\sum_{i,j,k,p,q}H^{p^{\ast}}_{,i}h^{p^{\ast}}_{ijk}h^{q^{\ast}}_{jk}H^{q^{\ast}}\\
&+\sum_{i,j,k}\Big(\sum_{p}(H^{p^{\ast}}_{,i}h^{p^{\ast}}_{jk}+H^{p^{\ast}}h^{p^{\ast}}_{ijk})\Big)
\Big(\sum_{q}(H^{q^{\ast}}_{,i}h^{q^{\ast}}_{jk}+H^{q^{\ast}}h^{q^{\ast}}_{ijk})\Big).
\endaligned
\end{equation}
\end{lemma}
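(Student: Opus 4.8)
The plan is to treat $\sum_{i,j,k,p}(h^{p^{\ast}}_{ijk})^{2}$ as the squared norm $|\nabla h|^{2}$ of the first covariant derivative of the second fundamental form and to expand $\tfrac12\mathcal{L}|\nabla h|^{2}$ by the Bochner-type product rule for the drift Laplacian $\mathcal{L}=\Delta+\langle V,\nabla\,\cdot\,\rangle$ with $V=x^{\top}$. Since $\tfrac12\mathcal{L}|T|^{2}=|\nabla T|^{2}+\langle T,\mathcal{L}T\rangle$ for any tensor $T$, one gets
\begin{equation*}
\frac12\mathcal{L}\sum_{i,j,k,p}(h^{p^{\ast}}_{ijk})^{2}=\sum_{i,j,k,l,p}(h^{p^{\ast}}_{ijkl})^{2}+\sum_{i,j,k,p}h^{p^{\ast}}_{ijk}\,\mathcal{L}h^{p^{\ast}}_{ijk},
\end{equation*}
which already produces the leading term of \eqref{2.1-15}. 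The identity \eqref{2.1-15} will come from evaluating $\sum h^{p^{\ast}}_{ijk}\mathcal{L}h^{p^{\ast}}_{ijk}$ directly, while \eqref{2.1-16} will be obtained by a separate route that exploits the constancy of $S$.

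For \eqref{2.1-15} I would write $\mathcal{L}h^{p^{\ast}}_{ijk}=\sum_{l}h^{p^{\ast}}_{ijkll}+\sum_{l}h^{p^{\ast}}_{ijkl}\langle x,e_{l}\rangle$ and carry out a Simons-type reduction of the Laplacian term. Commuting the two trace derivatives in $\sum_{l}h^{p^{\ast}}_{ijkll}$ past the others via the Ricci identities \eqref{2.1-9}--\eqref{2.1-10}, and using the total symmetry \eqref{2.1-7} of $h^{p^{\ast}}_{ijk}$ (which ties the normal index $p$ to the derivative indices) together with the trace relation $\sum_{i}h^{p^{\ast}}_{iik}=H^{p^{\ast}}_{,k}$, expresses this term through third covariant derivatives of $\vec{H}$ plus curvature corrections. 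Substituting the two-dimensional curvature formula \eqref{2.1-8}, $R_{ijkl}=K(\delta_{ik}\delta_{jl}-\delta_{il}\delta_{jk})=R_{i^{\ast}j^{\ast}kl}$ with $K=\tfrac12(H^{2}-S)$, collapses every curvature factor to the scalar $K$ and accounts for the terms $(10K-2)\sum(h^{p^{\ast}}_{ijk})^{2}$, $-5K|\nabla^{\perp}\vec{H}|^{2}$, $-\langle\nabla K,\nabla H^{2}\rangle$ and $-3\sum K_{,i}H^{p^{\ast}}_{,l}h^{p^{\ast}}_{il}$. The drift term $\sum_{l}h^{p^{\ast}}_{ijkl}\langle x,e_{l}\rangle$ and the remaining third-derivative-of-$\vec{H}$ contributions are converted by the self-expander structure equations \eqref{2.1-12}: differentiating the second line of \eqref{2.1-12} once more and contracting against $h^{p^{\ast}}_{ijk}$ produces exactly the cubic and quartic contractions $-2\sum h^{p^{\ast}}_{ijk}h^{p^{\ast}}_{ijl}h^{q^{\ast}}_{kl}H^{q^{\ast}}$, $-\sum h^{p^{\ast}}_{ijk}h^{p^{\ast}}_{il}h^{q^{\ast}}_{jl}H^{q^{\ast}}_{,k}$ and $-\sum h^{p^{\ast}}_{il}h^{p^{\ast}}_{ijk}h^{q^{\ast}}_{jkl}H^{q^{\ast}}$. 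Collecting all contributions gives \eqref{2.1-15}.

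For \eqref{2.1-16} I would avoid recomputing the fourth-order term and instead feed the constancy of $S$ into Lemma \ref{lemma 2.2}. Since $\mathcal{L}S=0$, identity \eqref{2.1-14} reads
\begin{equation*}
\sum_{i,j,k,p}(h^{p^{\ast}}_{ijk})^{2}=S\Big(\tfrac32 S+1\Big)-2H^{2}S+\tfrac12 H^{4}+\sum_{j,k,p,q}H^{p^{\ast}}h^{p^{\ast}}_{jk}H^{q^{\ast}}h^{q^{\ast}}_{jk}.
\end{equation*}
Applying $\tfrac12\mathcal{L}$ to both sides, the constant drops; using \eqref{2.1-13} for $\mathcal{L}H^{2}$ and the product rule $\tfrac12\mathcal{L}(\tfrac12 H^{4})=H^{2}\cdot\tfrac12\mathcal{L}H^{2}+\tfrac12|\nabla H^{2}|^{2}$ yields precisely $(H^{2}-2S)\big(|\nabla^{\perp}\vec{H}|^{2}-H^{2}\big)+\tfrac12|\nabla H^{2}|^{2}$ together with the contribution $-(H^{2}-2S)\sum H^{p^{\ast}}h^{p^{\ast}}_{jk}H^{q^{\ast}}h^{q^{\ast}}_{jk}$. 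For the last summand, set $A_{jk}=\sum_{p}H^{p^{\ast}}h^{p^{\ast}}_{jk}$, so that it equals $|A|^{2}$ and $\tfrac12\mathcal{L}|A|^{2}=|\nabla A|^{2}+\sum_{j,k}A_{jk}\mathcal{L}A_{jk}$. Here $\nabla_{i}A_{jk}=\sum_{p}\big(H^{p^{\ast}}_{,i}h^{p^{\ast}}_{jk}+H^{p^{\ast}}h^{p^{\ast}}_{ijk}\big)$ reproduces the completed-square term of \eqref{2.1-16}, while $\sum_{j,k}A_{jk}\mathcal{L}A_{jk}$ is expanded by $\mathcal{L}(fg)=(\mathcal{L}f)g+f\mathcal{L}g+2\langle\nabla f,\nabla g\rangle$, using the self-expander value of $\mathcal{L}H^{p^{\ast}}$ and the Simons identity for $\mathcal{L}h^{p^{\ast}}_{jk}$ (as in the proof of Lemma \ref{lemma 2.2}) together with \eqref{2.1-8}; this produces $-K(H^{4}+\sum_{j,k,p}h^{p^{\ast}}_{jk}H^{p^{\ast}}H^{j^{\ast}}H^{k^{\ast}})$, the two negative quartic contractions, the cross term $2\sum H^{p^{\ast}}_{,i}h^{p^{\ast}}_{ijk}h^{q^{\ast}}_{jk}H^{q^{\ast}}$, and a further multiple of $\sum H^{p^{\ast}}h^{p^{\ast}}_{jk}H^{q^{\ast}}h^{q^{\ast}}_{jk}$. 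Adding the two groups, the coefficient of $\sum H^{p^{\ast}}h^{p^{\ast}}_{jk}H^{q^{\ast}}h^{q^{\ast}}_{jk}$ assembles to $3K-2-H^{2}+2S$ and all remaining terms match \eqref{2.1-16}.

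The main obstacle is the bookkeeping in the Simons-type step for \eqref{2.1-15}: one must commute several covariant derivatives through \eqref{2.1-9}--\eqref{2.1-10} while respecting the Lagrangian symmetry \eqref{2.1-7}, which couples the normal index to the tangential derivative indices, and then reduce every curvature tensor to the single scalar $K$ with the correct integer coefficients---the factors $10K-2$, $-5K$, $-3$ and $-2$ are exactly where miscounting occurs. By contrast, the passage to \eqref{2.1-16} is comparatively safe once $\mathcal{L}S=0$ is used in \eqref{2.1-14}; the only delicate point is recognizing $\nabla_{i}A_{jk}=\sum_{p}(H^{p^{\ast}}_{,i}h^{p^{\ast}}_{jk}+H^{p^{\ast}}h^{p^{\ast}}_{ijk})$ as the gradient of $A_{jk}=\sum_{p}H^{p^{\ast}}h^{p^{\ast}}_{jk}$, which is what produces the completed-square term at the end of \eqref{2.1-16}.
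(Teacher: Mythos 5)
Your proposal is correct and follows essentially the same route as the paper, which itself omits the computation and simply remarks that \eqref{2.1-15} and \eqref{2.1-16} follow from the basic formulas \eqref{2.1-12} and the Ricci identities with $V=x^{\top}$, by a calculation as in \cite{CHW}: a Bochner--Simons expansion of $\tfrac12\mathcal{L}\sum(h^{p^{\ast}}_{ijk})^{2}$ with the $2$-dimensional curvature reduction \eqref{2.1-8} for the first identity, and differentiation of the relation $\sum(h^{p^{\ast}}_{ijk})^{2}=S(\tfrac32 S+1)-2H^{2}S+\tfrac12H^{4}+\sum H^{p^{\ast}}h^{p^{\ast}}_{jk}H^{q^{\ast}}h^{q^{\ast}}_{jk}$ (obtained from $\mathcal{L}S=0$ in \eqref{2.1-14}) for the second. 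Your detailed treatment of \eqref{2.1-16}, including the product rule on $\tfrac12H^{4}$, the tensor $A_{jk}=\sum_{p}H^{p^{\ast}}h^{p^{\ast}}_{jk}$ with $|\nabla A|^{2}$ giving the completed-square term, and the assembly of the coefficient $3K-2-H^{2}+2S$, checks out against the stated formula.
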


 \vskip10mm
\section{Proof of Theorem 1.1}

\vskip2mm

In order to use the maximum principle of Omori-Yau type (Lemma \ref{lemma 2.1}), we need the following proposition.
\begin{proposition}\label{proposition 3.1}
For a complete self-expander $x:M^{n}\rightarrow \mathbb{C}^{n}$ with non-zero constant squared norm $S$ of the second fundamental form, the Bakry-Emery Ricci tensor $Ric_{V}$ is bounded from below, where $V=x^{\top}$.
\end{proposition}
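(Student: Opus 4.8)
The plan is to compute the Bakry-Emery Ricci tensor $Ric_{V}$ with $V=x^{\top}$ explicitly and to show that it is bounded below by a constant depending only on $S$. Since the self-expander condition $H^{p^{\ast}}=\langle x,e_{p^{\ast}}\rangle$ is equivalent to $x^{\perp}=\vec{H}$, I would first write $V=x^{\top}=x-\vec{H}$ and differentiate along a tangent vector $e_{i}$. Denoting by $\overline{\nabla}$ the flat connection of $\mathbb{C}^{n}$, we have $\overline{\nabla}_{e_{i}}x=e_{i}$, while the tangential part of $\overline{\nabla}_{e_{i}}\vec{H}$ is, by the Weingarten formula, $-\sum_{k,p}H^{p^{\ast}}h^{p^{\ast}}_{ik}e_{k}$. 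Taking tangential projections gives the covariant derivative of $V$ on $M$,
$$\nabla_{e_{i}}V=e_{i}+\sum_{k,p}H^{p^{\ast}}h^{p^{\ast}}_{ik}e_{k}.$$

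Using the standard identity $(L_{V}g)(e_{i},e_{j})=\langle\nabla_{e_{i}}V,e_{j}\rangle+\langle\nabla_{e_{j}}V,e_{i}\rangle$ together with the symmetry of $h^{p^{\ast}}_{ij}$, I then obtain
$$\tfrac{1}{2}(L_{V}g)(e_{i},e_{j})=\delta_{ij}+\sum_{p}H^{p^{\ast}}h^{p^{\ast}}_{ij}.$$
Inserting this into the definition $Ric_{V}=Ric-\tfrac{1}{2}L_{V}g$ and substituting the Gauss equation \eqref{2.1-3} for the Ricci curvature, the term $\sum_{p}H^{p^{\ast}}h^{p^{\ast}}_{ij}$ appears with opposite signs in $Ric$ and in $\tfrac{1}{2}L_{V}g$ and therefore cancels, leaving
$$(Ric_{V})_{ij}=-\sum_{k,p}h^{p^{\ast}}_{ik}h^{p^{\ast}}_{kj}-\delta_{ij}.$$

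The final step is a pointwise eigenvalue estimate. The symmetric tensor $A_{ij}:=\sum_{k,p}h^{p^{\ast}}_{ik}h^{p^{\ast}}_{kj}$ is positive semidefinite, since for any tangent vector $\xi=\sum_{i}\xi_{i}e_{i}$ one has $\sum_{i,j}A_{ij}\xi_{i}\xi_{j}=\sum_{j,p}\big(\sum_{i}h^{p^{\ast}}_{ij}\xi_{i}\big)^{2}\geq 0$. Moreover $\mathrm{tr}(A)=\sum_{i,k,p}(h^{p^{\ast}}_{ik})^{2}=S$, so every eigenvalue of $A$ lies in $[0,S]$ and $0\leq A\leq S\,\mathrm{Id}$. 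Hence $Ric_{V}=-A-\mathrm{Id}\geq -(S+1)\,\mathrm{Id}$, and since $S$ is constant this is a uniform lower bound, which proves the proposition.

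I expect no serious difficulty, as the computation is essentially mechanical once the splitting $V=x-\vec{H}$ is exploited. The one point requiring care is keeping the Weingarten sign convention consistent with \eqref{2.1-3}, so that the $\sum_{p}H^{p^{\ast}}h^{p^{\ast}}_{ij}$ contributions cancel and the Bakry-Emery tensor collapses to the manifestly bounded form $-A-\mathrm{Id}$; after that, the two-sided bound on $A$ follows at once from the fact that $A$ is a sum of squares with trace $S$.
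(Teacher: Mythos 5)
Your proposal is correct and follows essentially the same route as the paper: both compute $\tfrac{1}{2}L_{x^{\top}}g(e_i,e_j)=\delta_{ij}+\sum_p H^{p^{\ast}}h^{p^{\ast}}_{ij}$, cancel this against the mean-curvature term in the Gauss equation \eqref{2.1-3}, and bound the remaining term $-\sum_{k,p}h^{p^{\ast}}_{ik}h^{p^{\ast}}_{kj}-\delta_{ij}$ below by $-(S+1)$. The only cosmetic difference is that you compute the full tensor via the covariant-derivative formula for $L_Vg$ and an eigenvalue estimate, while the paper evaluates only the diagonal entries $Ric_V(e_i,e_i)$ directly, which suffices for the quadratic-form bound.
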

\begin{proof}
For any unit vector $e\in T M^{n}$, we can 
choose a local tangent orthonormal frame field $\{{e_{i}}\}^{n}_{i=1}$ such that $e=e_{i}$. Then, by the definition of self-expander, direct computation gives
\begin{equation*}
\begin{aligned}
\frac{1}{2}L_{x^{\top}}g(e_{i},e_{i})
=&\frac{1}{2}x^{\top}(g(e_{i},e_{i}))-g([x^{\top},e_{i}],e_{i})\\
=&g(\nabla_{e_{i}}(x-x^{\bot}),e_{i})\\
=&1-\sum_{p}H^{p^{\ast}}g(\nabla_{e_{i}}e_{p^{\ast}},e_{i})\\
=&1+\sum_{p}H^{p^{\ast}}h^{p^{\ast}}_{ii}.
\end{aligned}
\end{equation*}
Then \eqref{2.1-3} yields
\begin{equation*}
\begin{aligned}
Ric_{x^{\top}}(e_{i},e_{i})
=&Ric(e_{i},e_{i})-\frac{1}{2}L_{x^{\top}}g(e_{i},e_{i})\\
=&\sum_{p}H^{p^{\ast}}h^{p^{\ast}}_{ii}-\sum_{j,p}(h^{p^{\ast}}_{ij})^{2}-(1+\sum_{p}H^{p^{\ast}}h^{p^{\ast}}_{ii})\\
=&-\sum_{j,p}(h^{p^{\ast}}_{ij})^{2}-1\geq-(S+1).
\end{aligned}
\end{equation*}
The proof of the Proposition \ref{proposition 3.1} is finished.
\end{proof}

Next, using \eqref{2.1-12}--\eqref{2.1-16}, we obtain the following important proposition.

\begin{proposition}\label{proposition 3.2}
Let $x:M^{2}\rightarrow \mathbb{C}^{2}$
be a self-expander with the constant squared norm $S$ of the second fundamental form, then $H^{2}\equiv0$.
\end{proposition}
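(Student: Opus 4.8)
The plan is to run the Omori--Yau maximum principle (Lemma \ref{lemma 2.1}) on the function $f=H^{2}$. Since $S$ is a fixed constant we may assume $S>0$ (if $S=0$ the surface is totally geodesic and $\vec H=0$ already), and the Cauchy--Schwarz estimate $H^{2}=\sum_{p}\big(\sum_{i}h^{p^{\ast}}_{ii}\big)^{2}\le 2\sum_{i,j,p}(h^{p^{\ast}}_{ij})^{2}=2S$ shows that $H^{2}$ is bounded from above. By Proposition \ref{proposition 3.1} the Bakry--Emery Ricci tensor of $V=x^{\top}$ is bounded from below, so Lemma \ref{lemma 2.1} applies: there is a sequence $\{p_{t}\}$ with $H^{2}(p_{t})\to\sup H^{2}$, $|\nabla H^{2}|(p_{t})\to0$ and $\limsup_{t}\mathcal{L}H^{2}(p_{t})\le0$. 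The strategy is to manufacture, out of the Simons-type identities of Lemma \ref{lemma 2.3}, a differential inequality for a suitable \emph{bounded} function that forces $\sup H^{2}=0$ along such a sequence, whence $H^{2}\equiv0$.

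The first reduction uses that $S$ is constant, so $\mathcal{L}S=0$ and \eqref{2.1-14} becomes the pointwise identity
\begin{equation*}
\sum_{i,j,k,p}(h^{p^{\ast}}_{ijk})^{2}=\tfrac{3}{2}S^{2}+S-2SH^{2}+\tfrac12 H^{4}+A,\qquad A:=\sum_{i,j,p,q}H^{p^{\ast}}h^{p^{\ast}}_{ij}H^{q^{\ast}}h^{q^{\ast}}_{ij}.
\end{equation*}
Writing $B_{ij}=\sum_{p}H^{p^{\ast}}h^{p^{\ast}}_{ij}$, one has $A=|B|^{2}$ and $\operatorname{tr}B=H^{2}$, giving the algebraic bounds $\tfrac12 H^{4}\le A\le SH^{2}\le2S^{2}$; in particular both $A$ and $T:=\sum_{i,j,k,p}(h^{p^{\ast}}_{ijk})^{2}$ are bounded. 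Because $S$ is constant, $T$ is a function of $H^{2}$ and $A$ alone, so differentiating the displayed identity and using $\mathcal{L}H^{4}=2H^{2}\mathcal{L}H^{2}+2|\nabla H^{2}|^{2}$ gives $\tfrac12\mathcal{L}T=(\tfrac12 H^{2}-S)\mathcal{L}H^{2}+\tfrac12|\nabla H^{2}|^{2}+\tfrac12\mathcal{L}A$. Comparing this with \eqref{2.1-16}, in which $K=\tfrac12(H^{2}-S)$, isolates a usable expression for $\mathcal{L}A$, while \eqref{2.1-15} expresses the same $\tfrac12\mathcal{L}T$ with the manifestly nonnegative leading term $\sum_{i,j,k,l,p}(h^{p^{\ast}}_{ijkl})^{2}$.

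I would then seek a constant $\mu$ so that the bounded test function $g=H^{2}+\mu A$ satisfies a differential inequality of the form $\tfrac12\mathcal{L}g\ge c\,H^{2}-(\text{terms that vanish along }\{p_{t}\})$ with $c>0$. Feeding this into Lemma \ref{lemma 2.1} applied to $g$ (again bounded, with $Ric_{V}$ bounded below) then yields $\sup H^{2}=0$. The decisive point is the choice of $\mu$ together with the verification of the sign, obtained by combining \eqref{2.1-13}, the expression for $\mathcal{L}A$ extracted above, and \eqref{2.1-15}, and then signing the resulting quartic contractions in $h$ and the mixed gradient terms.

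\emph{Main obstacle.} The genuine difficulty is the gradient term $|\nabla^{\perp}\vec H|^{2}$ in \eqref{2.1-13}: by \eqref{2.1-12} it equals $\sum_{i,p}\big(\sum_{k}h^{p^{\ast}}_{ik}\langle x,e_{k}\rangle\big)^{2}$, which is controlled only by $S|x^{\top}|^{2}$ and is therefore \emph{not} globally bounded, and it does not tend to $0$ along $\{p_{t}\}$ merely because $|\nabla H^{2}|\to0$. Hence it cannot simply be discarded: it must be made to cancel, or to appear with the correct sign, against the $-5K|\nabla^{\perp}\vec H|^{2}$ and the mixed gradient terms coming from \eqref{2.1-15}--\eqref{2.1-16}, and this is precisely what dictates the admissible $\mu$. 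The remaining, purely pointwise, task is to bound the quartic curvature terms; here I would exploit the $2$-dimensional Lagrangian structure, namely that $h^{p^{\ast}}_{ij}$ is totally symmetric in all three indices and is thus a binary cubic form with four independent components, so that after an orthogonal rotation of the frame normalising $\vec H$ (equivalently diagonalising $B$) the quantities $S$, $H^{2}$, $A$ and all quartic contractions become explicit polynomials in these components, whereupon the required inequality can be checked by hand.
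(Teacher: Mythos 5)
Your setup is the same as the paper's (the bound $H^{2}\le 2S$, Proposition \ref{proposition 3.1}, Lemma \ref{lemma 2.1} applied to $H^{2}$, and the identity for $\sum_{i,j,k,p}(h^{p^{\ast}}_{ijk})^{2}$ obtained from $\mathcal{L}S=0$, which is the third equation of \eqref{3.1-1}), and the algebra you do carry out (the bounds on $A$, the formula $\tfrac12\mathcal{L}T=(\tfrac12 H^{2}-S)\mathcal{L}H^{2}+\tfrac12|\nabla H^{2}|^{2}+\tfrac12\mathcal{L}A$) is correct. But beyond that point your text is a plan, not a proof: the entire content of the proposition is concentrated in the step you defer --- the existence of a constant $\mu$ for which $g=H^{2}+\mu A$ satisfies $\tfrac12\mathcal{L}g\ge c\,H^{2}-(\text{vanishing terms})$, together with ``signing the resulting quartic contractions,'' which you say ``can be checked by hand'' but never check. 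No $\mu$ is produced and no sign verification is attempted, so this is a genuine gap, not a completed argument. There is also a structural flaw in the scheme as stated: Lemma \ref{lemma 2.1} applied to $g$ produces a sequence along which $|\nabla g|\to 0$, not $|\nabla H^{2}|\to 0$, so any ``terms that vanish along $\{p_{t}\}$'' must be controlled by $|\nabla g|$ alone; you never address this, and for $g=H^{2}+\mu A$ the gradient of $A$ involves third derivatives of $h$ that are not obviously recoverable from $|\nabla g|$.

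Moreover, the paper's own proof indicates that a strictly signed pointwise inequality of your form is unlikely to exist. The paper handles the unbounded term $|\nabla^{\perp}\vec{H}|^{2}$ by a completely different mechanism: the self-expander structure equation $H^{p^{\ast}}_{,i}=-\sum_{k}h^{p^{\ast}}_{ik}\langle x,e_{k}\rangle$ (first equation of \eqref{2.1-12}) converts the information $|\nabla H^{2}|(p_{t})\to 0$, $H^{2}(p_{t})\to\sup H^{2}>0$ into \emph{linear constraints} on $\lim_{t}\langle x,e_{k}\rangle(p_{t})$, and then a case analysis on the limiting values $\bar\lambda_{1},\bar\lambda_{2},\bar\lambda$ of the three independent components of the (totally symmetric, Lagrangian) second fundamental form evaluates $\lim_{t}|\nabla^{\perp}\vec{H}|^{2}(p_{t})$ and the limiting third derivatives exactly. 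In several of these cases the gradient term does not vanish at all; instead one finds $\lim_{t}|\nabla^{\perp}\vec{H}|^{2}(p_{t})=\sum_{i,j,k,p}(\bar h^{p^{\ast}}_{ijk})^{2}$ \emph{exactly}, so that the two terms cancel identically in \eqref{3.1-2}, leaving $(\bar H^{2}-S)(\bar H^{2}-3S-2)\le 0$, which is contradictory because both factors are shown to be negative. This cancellation is an equality, not an inequality with room to spare, which strongly suggests that no fixed $\mu$ and $c>0$ can make your differential inequality hold pointwise; in any event, establishing it (or replacing it by the paper's limit-point case analysis) is precisely the missing mathematical work.
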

\begin{proof}
Since $S$ is constant, by taking exterior derivative of $S$ and using \eqref{2.1-14} to obtain that
\begin{equation}\label{3.1-1}
\begin{aligned}
&\sum_{i,j,p}h^{p^{\ast}}_{ij}h^{p^{\ast}}_{ijk}=0, \ \ \sum_{i,j,p}h^{p^{\ast}}_{ij}h^{p^{\ast}}_{ijkl}
+\sum_{i,j,p}h^{p^{\ast}}_{ijk}h^{p^{\ast}}_{ijl}=0, \ \ k,l=1, 2, \\
&\sum_{i,j,k}(h^{p^{\ast}}_{ijk})^{2}=S(\frac{3}{2}S+1)-2H^{2}S+\frac{1}{2}H^{4}
+\sum_{i,j,p,q}H^{p^{\ast}}h^{p^{\ast}}_{ij}H^{q^{\ast}}h^{q^{\ast}}_{ij}.
\end{aligned}
\end{equation}
If $\vec{H}=\sum_{p}H^{p^{\ast}}e_{p^{\ast}}=0$ at $p\in M^{2}$, we know $H^{2}\leq S$.
If $\vec{H}=\sum_{p}H^{p^{\ast}}e_{p^{\ast}}\neq 0$
at $p\in M^{2}$, we choose a local frame field $\{e_{1}, e_{2}\}$
such that
$$ \vec{H}=H^{1^{\ast}}e_{1^{\ast}}, \ \ H^{1^{\ast}}=|\vec{H}|=H, \ \ H^{2^{\ast}}=h^{2^{\ast}}_{11}+h^{2^{\ast}}_{22}=0.
$$
Besides, $h^{1^{\ast}}_{11}$, $h^{1^{\ast}}_{22}$ and $h^{2^{\ast}}_{11}$ by $\lambda_{1}$, $\lambda_{2}$ and $\lambda$, respectively, then
$$S=\lambda^{2}_{1}+3\lambda^{2}_{2}+4\lambda^{2}, \ \ H^{2}=(\lambda_{1}+\lambda_{2})^{2}\leq \frac{4}{3}(\lambda^{2}_{1}+3\lambda^{2}_{2})\leq \frac{4}{3}S,$$
and the equality of the above inequality holds if and only if
$$\lambda_{1}=3\lambda_{2}, \ \ \lambda=0.$$
Thus, $H^{2}$ is bounded from above since $S$ is constant. From the Proposition \ref{proposition 3.1},
we know that the Bakry-Emery Ricci curvature of $x:M^{2}\rightarrow \mathbb{C}^{2}$ is bounded from below. By applying the maximum principle
of Omori-Yau type concerning the operator $\mathcal{L}$ to the function $H^{2}$, there exists a sequence $\{p_{t}\} \in M^{2}$ such that
\begin{equation*}
\lim_{t\rightarrow\infty} H^{2}(p_{t})=\sup H^{2},\quad
\lim_{t\rightarrow\infty} |\nabla H^{2}|(p_{t})=0,\quad
\lim_{t\rightarrow\infty}\mathcal{L} H^{2}(p_{t})\leq 0.
\end{equation*}
For the constant $S$, by use of \eqref{2.1-14}, \eqref{2.1-15} and \eqref{2.1-16}, we know that
$\{h^{p^{\ast}}_{ij}(p_{t})\}$, $\{h^{p^{\ast}}_{ijk}(p_{t})\}$ and $\{h^{p^{\ast}}_{ijkl}(p_{t})\}$ are bounded sequences for $ i, j, k, l, p = 1,2$.
One can assume
\begin{equation*}
\begin{aligned}
&\lim_{t\rightarrow\infty}H^{2}(p_{t})=\sup H^{2}=\bar H^{2}, \ \ \lim_{t\rightarrow\infty}h^{p^{\ast}}_{ij}(p_{t})=\bar h^{p^{\ast}}_{ij}, \ \ \lim_{t\rightarrow\infty}h^{p^{\ast}}_{ijk}(p_{t})=\bar h^{p^{\ast}}_{ijk},  \\
&\lim_{t\rightarrow\infty}h^{p^{\ast}}_{ijkl}(p_{t})=\bar h^{p^{\ast}}_{ijkl}, \ \ i, j, k, l, p=1, 2.
\end{aligned}
\end{equation*}
Then it follows from \eqref{2.1-13} and the third equation of \eqref{3.1-1} that
\begin{equation}\label{3.1-2}
\aligned
0\geq&\lim_{t\rightarrow\infty}|\nabla^{\perp}\vec{H}|^{2}(p_{t})-\bar H^{2}-\sum_{i,j,p,q}\bar H^{p^{\ast}}\bar h^{p^{\ast}}_{ij}\bar H^{q^{\ast}}\bar h^{q^{\ast}}_{ij} \\
=&\lim_{t\rightarrow\infty}|\nabla^{\perp}\vec{H}|^{2}(p_{t})-\sum_{i,j,k}(\bar h^{p^{\ast}}_{ijk})^{2}+\frac{1}{2}(\bar H^{2}-S)(\bar H^{2}-3S-2).
\endaligned
\end{equation}

If $\sup H^{2}=0$, we know $H^{2}\equiv 0$. From now, we only consider that $\sup H^{2}>0$.
In fact, this situation does not exist.
Without loss of the generality,
at each point $p_{t}$, we can assume $H^{2}(p_{t})\neq 0$.

\noindent
Since $\lim_{t\rightarrow\infty} |\nabla H^{2}(p_{t})|=0$ and $|\nabla H^{2}|^{2}=4\sum_{k}(\sum_{p}H^{p^{\ast}}H^{p^{\ast}}_{,k})^{2}$,
we can see that
\begin{equation}\label{3.1-3}
\bar H^{1^{\ast}}_{,k}=0, \ \bar h^{1^{\ast}}_{11k}+\bar h^{1^{\ast}}_{22k}=0, \ \ k=1, 2.
\end{equation}
It follows from the first formula of \eqref{2.1-12} and $h^{2^{\ast}}_{11}+h^{2^{\ast}}_{22}=0$ that
\begin{equation}\label{3.1-4}
\begin{aligned}
&\bar H^{1^{\ast}}_{,1}=-\bar \lambda_{1}\lim_{t\rightarrow\infty} \langle x, e_{1} \rangle(p_{t})-\bar \lambda\lim_{t\rightarrow\infty} \langle x, e_{2} \rangle(p_{t}), \\
&\bar H^{1^{\ast}}_{,2}=-\bar \lambda\lim_{t\rightarrow\infty} \langle x, e_{1} \rangle(p_{t})-\bar \lambda_{2}\lim_{t\rightarrow\infty} \langle x, e_{2} \rangle(p_{t}), \\
\end{aligned}
\end{equation}
and
\begin{equation}\label{3.1-5}
\begin{aligned}
&\bar H^{2^{\ast}}_{,1}=-\bar \lambda\lim_{t\rightarrow\infty} \langle x, e_{1} \rangle(p_{t})-\bar \lambda_{2}\lim_{t\rightarrow\infty} \langle x, e_{2} \rangle(p_{t}), \\
&\bar H^{2^{\ast}}_{,2}=-\bar \lambda_{2}\lim_{t\rightarrow\infty} \langle x, e_{1} \rangle(p_{t})+\bar \lambda\lim_{t\rightarrow\infty} \langle x, e_{2} \rangle(p_{t}).
\end{aligned}
\end{equation}
By making use of the first equation of \eqref{3.1-1} and \eqref{3.1-3}, we obtain
\begin{equation}\label{3.1-6}
(\bar \lambda_{1}-3\bar \lambda_{2})\bar h^{1^{\ast}}_{11k}+3\bar \lambda\bar h^{2^{\ast}}_{11k}-\bar \lambda\bar h^{2^{\ast}}_{22k}=0, \ \ k=1, 2.
\end{equation}

By the above formulas, we can obtain the claim that
$$\bar \lambda=0.$$
In fact, we first assume that $\bar \lambda\neq0$ and then deduce
a contradiction.

\noindent
By \eqref{3.1-3}, \eqref{3.1-6} and the symmetry of indices, we infer
\begin{equation}\label{3.1-7}
(\bar \lambda_{1}-3\bar \lambda_{2})\bar h^{1^{\ast}}_{111}+4\bar \lambda\bar h^{2^{\ast}}_{111}=0, \ \
-3\bar \lambda\bar h^{1^{\ast}}_{111}-\bar \lambda\bar h^{2^{\ast}}_{222}+(\bar \lambda_{1}-3\bar \lambda_{2})\bar h^{2^{\ast}}_{111}=0.
\end{equation}
\eqref{3.1-3} and \eqref{3.1-4} imply that
\begin{equation}\label{3.1-8}
(\bar \lambda_{1}\bar \lambda_{2}-\bar \lambda^{2})\lim_{t\rightarrow\infty} \langle x, e_{1} \rangle(p_{t})=0, \ \
(\bar \lambda_{1}\bar \lambda_{2}-\bar \lambda^{2})\lim_{t\rightarrow\infty} \langle x, e_{2} \rangle(p_{t})=0.
\end{equation}
The claim is divided into two cases.

\noindent {\bf Case 1: $\bar \lambda_{1}\bar \lambda_{2}-\bar \lambda^{2}\neq 0$.}

\noindent
From \eqref{3.1-8}, we can see
\begin{equation*}
\lim_{t\rightarrow\infty} \langle x, e_{1} \rangle(p_{t})=\lim_{t\rightarrow\infty} \langle x, e_{2} \rangle(p_{t})=0,
\end{equation*}
This together with \eqref{3.1-5} imply that
\begin{equation}\label{3.1-9}
\bar H^{2^{\ast}}_{,k}=0, \ \bar h^{2^{\ast}}_{11k}+\bar h^{2^{\ast}}_{22k}=0, \ \ k=1, 2.
\end{equation}
Combining \eqref{3.1-3}, \eqref{3.1-7} with \eqref{3.1-9}, we obtain
\begin{equation*}
(\bar \lambda_{1}-3\bar \lambda_{2})\bar h^{1^{\ast}}_{111}+4\bar \lambda\bar h^{2^{\ast}}_{111}=0, \ \
-4\bar \lambda\bar h^{1^{\ast}}_{111}+(\bar \lambda_{1}-3\bar \lambda_{2})\bar h^{2^{\ast}}_{111}=0.
\end{equation*}
Thus, $$\bar h^{1^{\ast}}_{111}=\bar h^{2^{\ast}}_{111}=0$$ since $(\bar \lambda_{1}-3\bar \lambda_{2})^{2}+16\bar \lambda^{2}\neq0$.

\noindent
Then \eqref{3.1-3} and \eqref{3.1-9} give that
$$\bar h^{p^{\ast}}_{ijk}=0, \ \ i,j,k,p=1,2.$$
Consequently, by taking the limit of the third equation of \eqref{3.1-1}, we infer
\begin{equation*}
\begin{aligned}
0&=S(\frac{3}{2}S+1)-2\bar H^{2}S+\frac{1}{2}\bar H^{4}
+\sum_{i,j,p,q}\bar H^{p^{\ast}}\bar h^{p^{\ast}}_{ij}\bar H^{q^{\ast}}\bar h^{q^{\ast}}_{ij}\\
&=S(\frac{3}{2}S+1)-2\bar H^{2}S+\frac{1}{2}\bar H^{4}
+\bar H^{2}(\bar \lambda^{2}_{1}+\bar \lambda^{2}_{2}+2\bar \lambda^{2})\\
&=S(\frac{1}{2}S+1)+(S-\bar H^{2})^{2}+\frac{1}{2}\bar H^{2}(\bar \lambda_{1}-\bar \lambda_{2})^{2}
+2\bar H^{2}\bar \lambda^{2}.
\end{aligned}
\end{equation*}
Then $$S=\bar H^{2}=0.$$ It contradicts the hypothesis.

\noindent {\bf Case 2: $\bar \lambda_{1}\bar \lambda_{2}-\bar \lambda^{2}=0$.}

\noindent
Since $\bar \lambda_{1}\bar \lambda_{2}-\bar \lambda^{2}=0$, then \eqref{3.1-7} is equivalent to
\begin{equation}\label{3.1-10}
(\bar \lambda_{1}+3\bar \lambda_{2})^{2}\bar h^{1^{\ast}}_{111}=-4\bar \lambda^{2}\bar h^{2^{\ast}}_{222}, \ \
(\bar \lambda_{1}+3\bar \lambda_{2})^{2}\bar h^{2^{\ast}}_{111}=\bar \lambda(\bar \lambda_{1}-3\bar \lambda_{2})\bar h^{2^{\ast}}_{222}.
\end{equation}
Suppose $\bar \lambda_{1}+3\bar \lambda_{2}=0$, we have that $\bar h^{2^{\ast}}_{222}=0$,
and \eqref{3.1-7} yields
\begin{equation*}
-3\bar \lambda_{2}\bar h^{1^{\ast}}_{111}+2\bar \lambda\bar h^{2^{\ast}}_{111}=0, \ \
-\bar \lambda\bar h^{1^{\ast}}_{111}-2\bar \lambda_{2}\bar h^{2^{\ast}}_{111}=0.
\end{equation*}
Then  $\bar h^{1^{\ast}}_{111}=\bar h^{2^{\ast}}_{111}=0$. That is,
$$\lim_{t\rightarrow\infty}|\nabla^{\perp}\vec{H}|^{2}(p_{t})=0, \ \ \bar h^{p^{\ast}}_{ijk}=0, \ \ i,j,k,p=1,2.$$
It follows from \eqref{3.1-2} that
\begin{equation*}
(\bar H^{2}-S)(\bar H^{2}-3S-2)\leq0.
\end{equation*}
It is a contradiction since $\bar H^{2}-S=-8\bar \lambda^{2}_{2}-4\bar \lambda^{2}<0$.

\noindent
Suppose $\bar \lambda_{1}+3\bar \lambda_{2}\neq0$, \eqref{3.1-10} implies that
\begin{equation*}
\bar h^{1^{\ast}}_{111}=-\bar h^{1^{\ast}}_{221}=-\frac{4\bar \lambda^{2}}{(\bar \lambda_{1}+3\bar \lambda_{2})^{2}}\bar h^{2^{\ast}}_{222}, \ \
\bar h^{2^{\ast}}_{111}=-\bar h^{1^{\ast}}_{222}=\frac{\bar \lambda(\bar \lambda_{1}-3\bar \lambda_{2})}{(\bar \lambda_{1}+3\bar \lambda_{2})^{2}}\bar h^{2^{\ast}}_{222}.
\end{equation*}
Using \eqref{3.1-3}, \eqref{3.1-4} and \eqref{3.1-5}, by a direct caculation, we have
\begin{equation*}
\begin{aligned}
&\lim_{t\rightarrow\infty}|\nabla^{\perp}\vec{H}|^{2}(p_{t})=(\bar H^{2^{\ast}}_{,2})^{2}=(\bar h^{2^{\ast}}_{112}+\bar h^{2^{\ast}}_{222})^{2}=\frac{(\bar \lambda^{2}_{1}+9\bar \lambda^{2}_{2}+10\bar \lambda^{2})^{2}}{(\bar \lambda_{1}+3\bar \lambda_{2})^{4}}(\bar h^{2^{\ast}}_{222})^{2}, \\
&\sum_{i,j,k}(\bar h^{p^{\ast}}_{ijk})^{2}=7(\bar h^{1^{\ast}}_{111})^{2}+8(\bar h^{2^{\ast}}_{111})^{2}+(\bar h^{2^{\ast}}_{222})^{2}=\frac{(\bar \lambda^{2}_{1}+9\bar \lambda^{2}_{2}+10\bar \lambda^{2})^{2}}{(\bar \lambda_{1}+3\bar \lambda_{2})^{4}}(\bar h^{2^{\ast}}_{222})^{2}.
\end{aligned}
\end{equation*}
Then it follows from \eqref{3.1-2} that
\begin{equation*}
(\bar H^{2}-S)(\bar H^{2}-3S-2)\leq0.
\end{equation*}
It is a contradiction since $\bar H^{2}-S=-2\bar \lambda^{2}_{2}-2\bar \lambda^{2}<0$.
So the claim is proved.

Next, we now use $\bar \lambda =0$ to complete the proof of proposition \ref{proposition 3.2}.
For $\bar \lambda =0$, \eqref{3.1-4} implies
\begin{equation}\label{3.1-11}
\bar \lambda_{1}\lim_{t\rightarrow\infty} \langle x, e_{1} \rangle(p_{t})=0, \ \ \bar \lambda_{2}\lim_{t\rightarrow\infty} \langle x, e_{2} \rangle(p_{t})=0.
\end{equation}
If $\bar \lambda_{1}=0$, we know that $S=3\bar H^{2}=3\bar \lambda^{2}_{2}$ and  $\bar \lambda_{2}\neq0$ since $\bar H^{2}\neq0$.

\noindent
By making use of \eqref{3.1-3} and \eqref{3.1-6}, we infer
$$
\bar h^{1^{\ast}}_{11k}=\bar h^{1^{\ast}}_{22k}=0, \ \ k=1,2.
$$
Namely,
$$
\bar h^{1^{\ast}}_{111}=\bar h^{1^{\ast}}_{112}=\bar h^{1^{\ast}}_{221}=\bar h^{1^{\ast}}_{222}=0,
$$
and
$$
\lim_{t\rightarrow\infty}|\nabla^{\perp}\vec{H}|^{2}(p_{t})=\sum_{i,j,k}(\bar h^{p^{\ast}}_{ijk})^{2}=(\bar h^{2^{\ast}}_{222})^{2}.
$$
Then it follows from \eqref{3.1-2} that
\begin{equation*}
(\bar H^{2}-S)(\bar H^{2}-3S-2)\leq0.
\end{equation*}
It is a contradiction since $\bar H^{2}-S=-2\bar \lambda^{2}_{2}<0$.

\noindent
If $\bar \lambda_{2}=0$, we infer that $S=\bar H^{2}=\bar \lambda^{2}_{1}$ and  $\bar \lambda_{1}\neq0$ since $\bar H^{2}\neq0$.

\noindent
From \eqref{3.1-3}, \eqref{3.1-5} and \eqref{3.1-6}, we get
$$\bar H^{2^{\ast}}_{,1}=\bar H^{2^{\ast}}_{,2}=0, \ \ \bar h^{1^{\ast}}_{11k}=\bar h^{1^{\ast}}_{22k}=0, \ \ k=1,2.$$
Then we can draw a conclusion that $$\bar h^{p^{\ast}}_{ijk}=0, \ \ i,j,k,p=1,2.$$
By taking the limit of the third equation of \eqref{3.1-1}, we infer
\begin{equation*}
\begin{aligned}
0&=S(\frac{3}{2}S+1)-2\bar H^{2}S+\frac{1}{2}\bar H^{4}
+\sum_{i,j,p,q}\bar H^{p^{\ast}}\bar h^{p^{\ast}}_{ij}\bar H^{q^{\ast}}\bar h^{q^{\ast}}_{ij}\\
&=S(\frac{3}{2}S+1)-2\bar H^{2}S+\frac{1}{2}\bar H^{4}
+\bar H^{2}\bar \lambda^{2}_{1}\\
&=S(S+1).
\end{aligned}
\end{equation*}
Thus, $S=\bar H^{2}=0$.
It contradicts the hypothesis.

\noindent
If $\bar \lambda_{1}\bar \lambda_{2}\neq0$, it is easy to draw that by \eqref{3.1-11}
\begin{equation}\label{3.1-12}
\lim_{t\rightarrow\infty} \langle x, e_{1} \rangle(p_{t})=0, \ \ \lim_{t\rightarrow\infty} \langle x, e_{2} \rangle(p_{t})=0.
\end{equation}
By making use of \eqref{3.1-3}, \eqref{3.1-5}, \eqref{3.1-6} and \eqref{3.1-12},
we obtain
\begin{equation}\label{3.1-13}
\bar H^{p^{\ast}}_{,k}=\bar h^{p^{\ast}}_{11k}+\bar h^{p^{\ast}}_{22k}=0, \ \ k,p=1,2,
\end{equation}
and
\begin{equation}\label{3.1-14}
(\bar \lambda_{1}-3\bar \lambda_{2})\bar h^{1^{\ast}}_{11k}=0, \ \ k=1,2.
\end{equation}
Suppose $\bar \lambda_{1}-3\bar \lambda_{2}\neq0$, from \eqref{3.1-13} and \eqref{3.1-14}, we have
$$\bar h^{p^{\ast}}_{ijk}=0, \ \ i,j,k,p=1,2.$$
By taking the limit of the third equation of \eqref{3.1-1}, we infer
\begin{equation*}
\begin{aligned}
0&=S(\frac{3}{2}S+1)-2\bar H^{2}S+\frac{1}{2}\bar H^{4}
+\sum_{i,j,p,q}\bar H^{p^{\ast}}\bar h^{p^{\ast}}_{ij}\bar H^{q^{\ast}}\bar h^{q^{\ast}}_{ij}\\
&=S(\frac{3}{2}S+1)-2\bar H^{2}S+\frac{1}{2}\bar H^{4}
+\bar H^{2}(\bar \lambda^{2}_{1}+\bar \lambda^{2}_{2})\\
&=S(\frac{1}{2}S+1)+(S-\bar H^{2})^{2}+\frac{1}{2}\bar H^{2}(\bar \lambda_{1}-\bar \lambda_{2})^{2}.
\end{aligned}
\end{equation*}
Then $S=\bar H^{2}=0$. It contradicts the hypothesis.

\noindent
Suppose $\bar \lambda_{1}-3\bar \lambda_{2}=0$, we know that
\begin{equation}\label{3.1-15}
\bar H=\frac{4}{3}\bar \lambda_{1}, \ \ \bar H^{2}=\frac{16}{9}\bar \lambda^{2}_{1}, \ \ S=\frac{4}{3}\bar \lambda^{2}_{1}, \ \ \bar H^{2}=\frac{4}{3}S.
\end{equation}
Taking the limit of  the third equation of \eqref{3.1-1} and by a direct caculation, we can see that
\begin{equation*}
\sum_{i,j,k,p}(\bar h^{p^{\ast}}_{ijk})^{2}=\frac{5}{6}S^{2}+S.
\end{equation*}
Besides, using \eqref{3.1-13} and the symmetry of indices, we infer
\begin{equation*}
\sum_{i,j,p}(\bar h^{p^{\ast}}_{ij1})^{2}=4\Big((\bar h^{1^{\ast}}_{111})^{2}+(\bar h^{1^{\ast}}_{112})^{2}\Big), \ \
\sum_{i,j,k,p}(\bar h^{p^{\ast}}_{ijk})^{2}=8\Big((\bar h^{1^{\ast}}_{111})^{2}+(\bar h^{1^{\ast}}_{112})^{2}\Big).
\end{equation*}
Then
\begin{equation}\label{3.1-16}
\sum_{i,j,p}(\bar h^{p^{\ast}}_{ij1})^{2}=\frac{1}{2}\sum_{i,j,k,p}(\bar h^{p^{\ast}}_{ijk})^{2}=\frac{5}{12}S^{2}+\frac{1}{2}S.
\end{equation}
Taking the limit of the second equation of \eqref{3.1-1} and choosing $k=l=1$, we obtain
\begin{equation*}
\bar h^{1^{\ast}}_{11}\bar h^{1^{\ast}}_{1111}+3\bar h^{1^{\ast}}_{22}\bar h^{1^{\ast}}_{2211}
+3\bar h^{2^{\ast}}_{11}\bar h^{2^{\ast}}_{1111}+\bar h^{2^{\ast}}_{22}\bar h^{2^{\ast}}_{2222}=-\sum_{i,j,p}(\bar h^{p^{\ast}}_{ij1})^{2},
\end{equation*}
namely,
\begin{equation}\label{3.1-17}
\bar \lambda_{1}\bar h^{1^{\ast}}_{1111}+3\bar \lambda_{2}\bar h^{1^{\ast}}_{2211}
=-\sum_{i,j,p}(\bar h^{p^{\ast}}_{ij1})^{2}.
\end{equation}
It follows from $\bar \lambda_{1}=3\bar \lambda_{2}$, \eqref{3.1-16} and \eqref{3.1-17} that
\begin{equation}\label{3.1-18}
\bar \lambda_{1}(\bar h^{1^{\ast}}_{1111}+\bar h^{1^{\ast}}_{2211})
=-(\frac{5}{12}S^{2}+\frac{1}{2}S).
\end{equation}
By mean of the second equation of \eqref{2.1-12}, \eqref{3.1-12} and \eqref{3.1-15} and choosing $i=j=p=1$, we know
\begin{equation*}
\begin{aligned}
\bar H^{1^{\ast}}_{,11}
=&-\bar h^{1^{\ast}}_{11}-\sum_{k}\bar h^{1^{\ast}}_{1k}\bar h^{1^{\ast}}_{k1}\bar H^{1^{\ast}}\\
=&-\bar \lambda_{1}-\bar \lambda^{2}_{1}\bar H=-\frac{3}{4}\bar H(S+1),
\end{aligned}
\end{equation*}
which implies that
\begin{equation}\label{3.1-19}
\bar h^{1^{\ast}}_{1111}+\bar h^{1^{\ast}}_{2211}=-\frac{3}{4}\bar H(S+1),
\end{equation}
Using \eqref{3.1-15}, \eqref{3.1-18} and \eqref{3.1-19}, we infer
\begin{equation*}
-\frac{3}{4}\bar \lambda_{1}\bar H(S+1)=-\frac{3}{4}S(S+1)=-(\frac{5}{12}S^{2}+\frac{1}{2}S).
\end{equation*}
Then
$S=\bar H^{2}=0$.
It contradicts the hypothesis.
\end{proof}

\vskip3mm
\noindent
{\it Proof of Theorem \ref{theorem 1.2}}.
If $S$ is constant,
from the Proposition \ref{proposition 3.2}, we have that $H^{2}\equiv0$ which implies that $M^{2}$ is a hyperplane $\mathbb{R}^{2}$ through the origin. In fact, it follows from the definition of lagrangian self-expander and the first equation of \eqref{2.1-12} that
$$H^{p^{\ast}}=\langle x, e_{p^{\ast}}\rangle=0, \ \ \sum_{k}h^{p^{\ast}}_{ik}\langle x, e_{k}\rangle=0, \ \ i,p=1,2.$$
That is,
$$h^{1^{\ast}}_{11}+h^{1^{\ast}}_{22}=0, \ \ h^{2^{\ast}}_{11}+h^{2^{\ast}}_{22}=0$$
and
\begin{equation*}
\begin{aligned}
&h^{1^{\ast}}_{11}\langle x, e_{1}\rangle+h^{1^{\ast}}_{12}\langle x, e_{2}\rangle=0, \ \
&h^{2^{\ast}}_{21}\langle x, e_{1}\rangle+h^{2^{\ast}}_{22}\langle x, e_{2}\rangle=0, \\
&h^{1^{\ast}}_{21}\langle x, e_{1}\rangle+h^{1^{\ast}}_{22}\langle x, e_{2}\rangle=0, \ \
&h^{2^{\ast}}_{11}\langle x, e_{1}\rangle+h^{2^{\ast}}_{12}\langle x, e_{2}\rangle=0.
\end{aligned}
\end{equation*}
Then by the symmetry of indices, we infer
$$\Big((h^{1^{\ast}}_{11})^{2}+(h^{2^{\ast}}_{11})^{2}\Big)\langle x, e_{k}\rangle=0, \ \ k=1,2.$$
Suppose $\langle x, e_{k}\rangle=0$, we draw
$$0=\langle x, e_{k}\rangle_{,k}=1+\sum_{p}h^{p^{\ast}}_{kk}\langle x, e_{p^{\ast}}\rangle=1.$$
It is impossible. Thus,
$$(h^{1^{\ast}}_{11})^{2}+(h^{2^{\ast}}_{11})^{2}=0,$$
which implies $$ h^{p^{\ast}}_{ik}=0, \ \ i,k,p=1,2.$$
So the main theorem of the present paper is proved.

\begin{flushright}
$\square$
\end{flushright}

\noindent{\bf Acknowledgements.}
The first author was partially supported by the China Postdoctoral Science Foundation Grant No.2022M711074. The second author was partly supported by grant No.12171164 of NSFC, GDUPS (2018), Guangdong Natural Science Foundation Grant No.2023A1515010510.

\end{document}